\newtheorem{theorem}{Theorem}[section]
\newtheorem{lem}[theorem]{Lemma}
\newtheorem{corol}[theorem]{Corollary}
\theoremstyle{plain}
    \newtheoremstyle{TheoremNum}
        {\topsep}{\topsep}              %%% space between body and thm
        {\itshape}                      %%% Thm body font
        {}                              %%% Indent amount (empty = no indent)
        {\bfseries}                     %%% Thm head font
        {.}                             %%% Punctuation after thm head
        { }                             %%% Space after thm head
        {\thmname{#1}\thmnote{ \bfseries #3}}%%% Thm head spec
    \theoremstyle{TheoremNum}
    \newtheorem{thmn}{Theorem}
\newcommand{\norm}[1]{{\| #1 \|}}
\newcommand{\C}{\mathbb{C}}
\newcommand{\Z}{\mathbb{Z}}
\newcommand{\ip}[2]{\langle #1 , #2 \rangle}
\begin{document}

\title{Sums of twisted circulants}

\date{July 17, 2016}

\begin{abstract}
The rate of convergence of simple random walk on the Heisenberg group over $\Z/n\Z$ with a standard
generating set was determined by Bump et al \cite{AnExercise,persi_et_al} to be $O(n^2)$.  We extend
this result to random walks on the same groups with an arbitrary minimal symmetric generating set. 
We also determine the rate of convergence of simple random walk on higher-dimensional versions of
the Heisenberg group with a standard generating set.  We obtain our results via Fourier analysis, using
an eigenvalue bound for sums of twisted circulant matrices.  The key tool is a generalization of a version of
the Heisenberg Uncertainty Principle due to Donoho-Stark \cite{DS}.
\end{abstract}

\author[Abrams]{Aaron Abrams}
\address[Aaron Abrams]{Washington and Lee University}
\email{abramsa@wlu.edu}

\author[Landau]{Henry Landau}
\address[Henry Landau]{AT\&T Research}
\email{henry.j.landau@gmail.com}

\author[Landau]{Zeph Landau}
\address[Zeph Landau]{University of California, Berkeley}
\email{zeph.landau@gmail.com}

\author[Pommersheim]{Jamie Pommersheim}
\address[Jamie Pommersheim]{Reed College}
\email{jamie@reed.edu}

\maketitle

\section{Introduction}
In this paper we establish upper bounds on the norms of certain sums of Hermitian matrices.
Our interest in this result is twofold.  First, using standard Fourier analytic techniques developed
by Diaconis \cite{persiBook}, our bounds enable an analysis of rates of convergence for a family 
of random walks on various finite Heisenberg groups, building on work of Bump et al.~\cite{AnExercise}.
Second, we obtain our bound using a type of ``uncertainty principle'' that may be useful in analyzing 
other related problems.

\subsection{Heisenberg groups}\label{sec:intro}

The \emph{Heisenberg group} over a ring $R$ is the set of $3\times 3$ matrices of the form
\[
\left(\begin{array}{ccc}   1&x&z  \\ 0&1&y  \\ 0&0&1 \\ \end{array}\right),
\]
with $x,y,z\in R$, under the usual operation of matrix multiplication.  
In this paper we take $R=\Z/n\Z$ for an odd prime integer $n$.  Letting $(x,y,z)$ denote the above matrix,
the resulting Heisenberg group $H(n)$ is generated by $X=(1,0,0)$ and $Y=(0,1,0)$, 
since $XYX^{-1}Y^{-1}=(0,0,1)$.
This group has order $n^3$ and center $\{(0,0,z)\}$ of size $n$.  

The stipulations that $n$ be odd and prime are mainly for convenience.

The authors of \cite{AnExercise} establish the rate of convergence of random walk on $H(n)$ 
with steps taken uniformly at random from $\{X^{\pm1}, Y^{\pm1}\}$.  Their approach is to
use the standard representation-theoretic techniques of \cite{persiBook}, though there are difficulties.
The group $H(n)$ has $n^2$ one-dimensional representations and $n-1$ irreducible representations 
of dimension $n$.  To apply the standard machinery what is needed are good bounds on the 
eigenvalues of the average of the images of the generators under the $n$-dimensional 
representations of $H(n)$.  This leads to the study (in \cite{AnExercise}) of
the matrices $M(r)$ defined as follows.  Let $S$ be the $n\times n$ ``shift'' matrix which acts on the standard basis by $Se_i=e_{(i-1)\mod n}$ 
and for $r=1,\dots, n-1$, let $D(r)$ be the diagonal matrix whose $j$th entry is $2\cos (2\pi  r j/n)$.
Then for each $r$ the matrix
$$
M(r) = \frac14 (S+S^{-1}+D(r))
$$
is the average of the images of $X^{\pm1}$ and $Y^{\pm1}$ in one of the $n$-dimensional irreducible
representations of $H(n)$.  The papers \cite{AnExercise} and \cite{persi_et_al} present several different
proofs that the norm of $M(r)$ is bounded above by $1- O(\frac{1}{n})$.  Once this is done,
a straightforward analysis of the one-dimensional representations of $H(n)$ reveals that
those eigenvalues can be as large as $1-O(\frac 1{n^2})$, so the bound on $M(r)$ shows
that the behavior of random walk is governed by the one-dimensional representations and a mixing time of 
$O(n^2)$ is established.

In this paper we generalize the first approach utilized in \cite{persi_et_al} in two different ways.
Our main contribution is the analysis of simple
random walks on $H(n)$ with different minimal, symmetric generating sets.
For  $s_1, s_2, r_1, r_2\in \Z/n\Z$, consider random walk on 
$H(n)$ with steps taken uniformly from
 $$G = \{  (s_1, r_1, 0), (-s_1, -r_1, 0), (s_2, r_2, 0), (-s_2, -r_2, 0) \}.$$ 
Note that $G$ generates $H(n)$ if and only if $r_1s_2\not\equiv r_2s_1 \mod n$.
In the case that $G$ generates, the matrices analogous to $M(r)$ that arise in the representation 
theory are averages of matrices that we call {\it twisted circulants}. We will define these matrices shortly.  
The main aim of this paper is to establish the same bound $1-O(\frac 1 n )$ on the eigenvalues of 
these averages of twisted circulants.  The same reasoning then yields mixing times $O(n^2)$ for
random walk on $H(n)$ with generating set $G$. 

Our approach, following \cite{persi_et_al}, is to generalize a version of the Heisenberg Uncertainty
Principle due to Donoho and Stark \cite{DS}.  We discuss this further in Section 
\ref{sec:remarks}.

%Additionally, in Section \ref{sec:higher} we study a simple random walk on the higher-dimensional Heisenberg groups.
%These are also upper triangular matrix groups but of higher nilpotency class.  We show that the rate of convergence
%of simple random walk with a standard set of generators can be determined easily from the lower-dimensional case.  
%Probably the same rate occurs with other generating sets, though we do not carry out this analysis.
%The key tool here is a tensor product decomposition of the representations of these groups.  As we show in
%Section \ref{higher}, this decomposition is naturally compatible with the standard basis.

Another generalization of the main result of \cite{persi_et_al} applies to simple random walk on higher-dimensional 
Heisenberg groups.  The \emph{$d$-dimensional Heisenberg group} consists of upper triangular $(d+2)\times(d+2)$ 
matrices with ones on the diagonal and zeroes everywhere else except the top row and the rightmost column.  With 
entries from $\Z/n\Z$, this is a nilpotent group of order $n^{2d+1}$ and class $d+1$.  

Using a tensor product decomposition of the representations of these groups, we show that the rate of convergence
of simple random walk on these groups with a standard set of generators can be determined easily from the 
lower-dimensional case.  The result is Corollary \ref{cor:higher}: with the standard generators, $dn^2$ steps are 
necessary and sufficient for convergence of simple random walk on the $d$-dimensional Heisenberg group over 
$\Z/n\Z$.  This works easily because the tensor product decomposition is naturally compatible with the standard 
basis.  Probably the same rate occurs with other generating sets, though we do not carry out this analysis.

%\iffalse
%
%We study the top eigenvalue of averages of certain $n\times n$ unit norm Hermitian matrices.  
%The size of the gap between the top eigenvalue and 1 gives information about random walks
%on the finite Heisenberg groups, because the operators of interest lie in the image of irreducible
%representations of these groups.
%
%
%For unit norm Hermitian matrices to average out to an operator with norm 1, the leading eigenvectors 
%must line up (this happens e.g. when the matrices are equal).
%For unit norm Hermitian matrices to average out to an operator with norm \emph{close} to 1, 
%there have to be nearly leading eigenvectors from each matrix that nearly line up.
%The operators we will average have at most $O(\sqrt{n})$ eigenvalues greater than 
%$1-\frac c n$.  We show that under mild hypotheses pairs of our operators satisfy an \emph{uncertainty principle}, 
%which in particular implies that no vector can be close to the large eigenspaces of both operators simultaneously.  
%This results in an upper bound of $1-O(\frac 1 n)$ on the leading eigenvalue of the average.
%
%\fi

\subsection{Main theorem}\label{sec:results}
We begin with the definition of the twisted circulants.   
We will index the rows and colums of all matrices starting at 0. 
Let $S$ denote the $n\times n$ matrix with $ij$ entry $1$ if $i+1=j \mod n$ and $0$ otherwise.  
Thus $S$ enacts the shift operator on the standard basis of $\C^n$.  A \emph{circulant} is any 
non-negative power of $S$.  
If $D$ is a diagonal matrix with entries $d_i$ and $C=S^s$ is a circulant then the matrix
$DC$ has the entries $d_i$ on the $s$th (cyclic) diagonal above the main: 

\[
\left(\begin{array}{cccccccc} &  &  & d_0 &  &  &  &  \\ &  &  &  & d_1 &  &  &  \\ &  &  &  &  & \ddots &  &  \\ &  &  &  &  &  & \ddots &  \\ &  &  &  &  &  &  &  d_{n-s-1} \\d_{n-s} &  &  &  &  &  &  &  \\ & \ddots &  &  &  &  &  &  \\ &  & d_{n-1} &  &  &  &  & \end{array}\right)
\]

Recall that $n$ is a fixed odd prime. Let $\omega=e^{2\pi i/n}$ and let $R$ be the diagonal matrix with 
$R_{j,j}= \omega^j$.  Note that 
\begin{equation*}\label{eq:almostcommute}
SR=\omega RS.
\end{equation*}
A {\em twisted circulant} is a unitary matrix of the form
\[A(r,s):=R^rS^s,\]
and these have Hermitian counterparts
\[ M(r,s):=\frac 1 2 \left(R^r S^s + (R^rS^s)^*\right)=\frac 1 2 \left( A(r,s)+\omega^{rs}A(-r,-s)\right)\]
with $r,s\in \Z_n$.

The matrices $A(r,s)$ are scalar multiples of the matrices in the images of the $n$-dimensional 
representations of $H(n)$.
Note that the matrix $M(r)$ that arose in \cite{AnExercise} is equal to $\frac 12 (M(r,0)+M(0,1))$.

Our main result (proved in Section \ref{s:4}) is this:

\begin{theorem}\label{thm:main}
If $r_1s_2\not\equiv r_2s_1 \mod n$ then the norm of $\frac 1 2 \left( M(r_1,s_1)+M(r_2,s_2) \right)$ 
is at most $1-O(\frac 1 n)$.
\end{theorem}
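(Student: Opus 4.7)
The plan is to exhibit an uncertainty-type lower bound: for any unit vector $v \in \C^n$, the quantities $\delta_i := 1 - \langle v, M(r_i, s_i) v\rangle$ must satisfy $\delta_1 + \delta_2 \geq c/n$ for some universal constant $c>0$, which immediately yields $\|\tfrac{1}{2}(M(r_1,s_1) + M(r_2,s_2))\| \leq 1 - c/(2n)$. The first step I would take is to reduce to a canonical form. The hypothesis $r_1 s_2 - r_2 s_1 \not\equiv 0 \pmod n$ says that $(r_1,s_1)$ and $(r_2,s_2)$ form a symplectic basis of $(\Z/n\Z)^2$, so there is a $g \in SL_2(\Z/n\Z)$ sending $(r_1, s_1) \mapsto (0, 1)$ and $(r_2, s_2) \mapsto (a, 0)$ for some $a \not\equiv 0 \pmod n$. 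Conjugating by the corresponding Weil unitary $U_g$ reduces the problem to $A(r_1,s_1)=S$ and $A(r_2,s_2)=R^a$, up to $n$th-root-of-unity phases that preserve the spectrum of each $M_i$ and hence the norm in question.

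In this reduced form, for a unit vector $v$ one expands
\[
\delta_1 = \sum_k |\hat v_k|^2 \bigl(1 - \cos(2\pi k/n)\bigr), \qquad
\delta_2 = \sum_j |v_j|^2 \bigl(1 - \cos(2\pi a j/n)\bigr),
\]
where $\hat v$ is the discrete Fourier transform of $v$. Using $1 - \cos(2\pi x) \geq 8\|x\|^2$ (with $\|x\|$ the distance to the nearest integer) together with Markov's inequality, if $\delta_1,\delta_2 \leq \delta$ then $v$ is $\tfrac12$-concentrated on the set $T := \{j : \|aj/n\| \leq t\}$ and $\hat v$ is $\tfrac12$-concentrated on $W := \{k : \|k/n\| \leq t\}$, for some $t$ of order $\sqrt\delta$. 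Since $a$ is a unit mod $n$, the map $j \mapsto aj$ is a bijection on $\Z/n\Z$, so both $|T|$ and $|W|$ have size $O(tn)$.

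The final step invokes the Donoho-Stark uncertainty principle (or the paper's announced generalization thereof): joint $\tfrac12$-concentration of $v$ on $T$ and of $\hat v$ on $W$ forces $|T|\cdot |W| \geq c' n$ for a universal $c'>0$. Combining with the size bounds gives $(tn)^2 \geq c' n$, hence $t \geq c''/\sqrt n$ and $\delta \geq c''/n$, as desired. The main obstacle I anticipate is establishing the quantitative Donoho-Stark bound with approximate (rather than exact) supports, particularly because one of the relevant supports lives in the twisted coordinate $j \mapsto aj$; this is presumably what the announced generalization of the uncertainty principle is designed to handle. A secondary subtlety is the careful tracking of the Weil-cocycle phases. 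One clean alternative would be to bypass the Weil reduction altogether, working with a chirped Fourier transform adapted to each direction $(r_i, s_i)$ and proving the uncertainty relation directly for the intrinsic pair of transforms.
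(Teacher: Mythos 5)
Your proposal is correct in outline and reaches the bound by a genuinely different reduction than the paper's. The paper treats the pair $(r_1,s_1),(r_2,s_2)$ directly: it writes down explicit diagonalizers $X(r,s)=\Pi_s BF$ (a chirp times the DFT) for each twisted circulant, evaluates the entries of the relative change of basis $X(r_1,s_1)^*X(r_2,s_2)$ as Gauss sums of modulus exactly $\sqrt n$ precisely when $r_1s_2\not\equiv r_2s_1$, and then applies a Donoho--Stark-type lemma generalized to an arbitrary unitary with flat entries. You instead conjugate by a Weil (metaplectic) unitary attached to an element of $\mathrm{SL}_2(\Z/n\Z)$ to move the pair to $(0,1)$ and $(a,0)$, i.e.\ to $S$ and $R^a$, after which the two eigenbases are the Fourier and standard bases and the classical Donoho--Stark principle for the DFT itself suffices; your estimate $1-\cos(2\pi x)\ge 8\|x\|^2$ plus Markov is also a cleaner, more quantitative version of the paper's ``take $T$ to be the top $\sqrt n$ locations'' step. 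The paper's route buys self-containedness (the Gauss-sum computation is short and no covariance theorem is invoked); your route buys conceptual clarity and reuse of the classical uncertainty statement, but a full writeup must actually supply the covariance $U_gA(r,s)U_g^*=c\,A((r,s)g)$ --- whose construction for odd prime $n$ is built from exactly the chirps, permutations $\Pi_s$, and $F$ appearing in the paper, so the Gauss sums reappear there --- and your anticipated obstacle about the ``twisted coordinate'' $j\mapsto aj$ is a non-issue, since Donoho--Stark uses only the cardinality $|T|=O(tn)$, not its structure. Three small repairs: (i) ``$\tfrac12$-concentration'' is too weak, since tail norms $1/\sqrt2$ make the approximate-support bound $(1-\epsilon_T-\epsilon_W)^2$ vacuous; run Markov so that, say, $9/10$ of the mass lies on each set, which only changes constants; (ii) the Weil phases are $n$th roots of unity for odd $n$, so they merely translate the index sets $T,W$ and do not affect the counting, as you suspected; (iii) the theorem bounds the operator norm, so the bottom of the spectrum must be handled too, which follows by the identical argument applied to $-M$ (concentration near $k\approx n/2$ rather than $k\approx 0$) --- a point the paper's own proof also treats only implicitly via the absolute values $|\ip{A_iv}{v}|$.
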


If $r_1s_2\equiv r_2s_1 \mod n$ then the operators $M(r_1,s_1)$ and $M(r_2,s_2)$ commute, 
and the analysis is different; we discuss this case in Section \ref{sec:equalslopes}.

\begin{corol} If $r_1s_2\not\equiv r_2s_1 \mod n$ then $n^2$ steps are necessary and sufficient for 
random walk on $H(n)$ with generating set 
$\{  (s_1, r_1, 0), (-s_1, -r_1, 0), (s_2, r_2, 0), (-s_2, -r_2, 0) \}$ to become uniform.
\end{corol}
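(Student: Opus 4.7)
The plan is to combine \Thm{thm:main} with the Diaconis--Shahshahani upper bound lemma for the upper bound, and to project to the abelianization for the lower bound. Let $Q$ denote the uniform probability measure on the four-element generating set. The upper bound lemma gives
\[
4\norm{Q^{*k}-U}_{TV}^{\,2} \le \sum_{\rho\ne 1} d_\rho^{\,2}\,\norm{\hat Q(\rho)}^{2k},
\]
summed over the nontrivial irreducible representations of $H(n)$. These split into the $n-1$ irreducibles of dimension $n$, indexed by a nonzero $\ell\in\Z/n\Z$, and the $n^2-1$ nontrivial characters, which factor through $H(n)^{ab}\cong(\Z/n\Z)^2$.

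For each nonzero $\ell$, a direct computation in the Stone--von Neumann model of the $\ell$-th representation gives $\rho_\ell((s,r,0))=R^{\ell r}S^{s}$, so that $\hat Q(\rho_\ell)$ is unitarily equivalent to $\frac12\bigl(M(\ell r_1,s_1)+M(\ell r_2,s_2)\bigr)$. Since $n$ is prime and $r_1s_2\not\equiv r_2s_1$, the discriminant $(\ell r_1)s_2-(\ell r_2)s_1=\ell(r_1s_2-r_2s_1)$ is nonzero modulo $n$, so the hypothesis of \Thm{thm:main} holds for every nonzero $\ell$ and we obtain $\norm{\hat Q(\rho_\ell)}\le 1-c/n$ uniformly. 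The $n$-dimensional contribution to the upper bound is therefore at most $(n-1)n^2(1-c/n)^{2k}$, which is $o(1)$ already for $k\gg n\log n$. For the characters, $\chi_{a,b}(x,y,z)=\omega^{ax+by}$ has Fourier coefficient $\lambda_{a,b}=\frac12\bigl(\cos(2\pi u_1/n)+\cos(2\pi u_2/n)\bigr)$ with $u_j=as_j+br_j$; because $r_1s_2\not\equiv r_2s_1$, the map $(a,b)\mapsto(u_1,u_2)$ is a bijection of $(\Z/n\Z)^2$, so $\sum_{(a,b)\ne 0}\lambda_{a,b}^{\,2k}$ equals the classical upper bound for the standard nearest-neighbor walk on $(\Z/n\Z)^2$, which is $o(1)$ for $k\gg n^2$. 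Combined, these two estimates establish the sufficiency direction.

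For necessity, project the walk to $H(n)/Z(H(n))\cong(\Z/n\Z)^2$. The generators project to $\pm(s_1,r_1)$ and $\pm(s_2,r_2)$, which form a basis of $(\Z/n\Z)^2$ by the non-degeneracy hypothesis; a linear change of variable therefore identifies the projected walk with the standard nearest-neighbor walk on $(\Z/n\Z)^2$, which requires $\Omega(n^2)$ steps to mix. Since total variation distance cannot increase under quotienting, the same lower bound holds for the original walk. The main obstacle is the representation-theoretic bookkeeping in the middle paragraph: explicitly identifying $\hat Q(\rho_\ell)$ as a sum of twisted circulants with the right parameters, accounting for the phases $\omega^{\ell rs}$ that appear when collapsing $(s,r,0)=X^sY^rZ^{-sr}$ under $\rho_\ell$, and checking that the non-degeneracy hypothesis of \Thm{thm:main} is preserved for every nonzero $\ell$. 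Once this is done, the remaining computations are routine Fourier analysis on finite groups.
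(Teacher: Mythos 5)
Your proposal is correct and takes essentially the approach the paper intends (the corollary is stated without proof, the introduction deferring to the standard Fourier machinery of \cite{persiBook} and \cite{AnExercise}): Theorem~\ref{thm:main} controls the $n$-dimensional representations --- the twist by $\ell\neq 0$ being harmless since $n$ is prime --- while the one-dimensional characters reduce to the nearest-neighbor walk on $(\Z/n\Z)^2$ and dictate the $n^2$ rate, with the matching lower bound coming from the abelianized walk exactly as you argue. The bookkeeping you flag is benign: in the model $[\rho_\ell(x,y,z)f](w)=\omega^{\ell(yw+z)}f(w+x)$ one has $\rho_\ell\bigl((s,r,0)\bigr)=R^{\ell r}S^{s}$ with no extra phase, so for the inverse-closed walk $\hat Q(\rho_\ell)$ is literally $\frac12\bigl(M(\ell r_1,s_1)+M(\ell r_2,s_2)\bigr)$.
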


The next observation can be used in the same way 
to determine mixing times for random walks with larger sets of generators.

%Theorem \ref{t:main} is the generalization of Theorem \ref{t:persi} needed to apply their program to the analysis of a random walk on $G$ (see section \ref{ss:1}).
\begin{corol} \label{c:1}
Let $r_i,s_i \in \Z_n$ for $i=1,\ldots,d$ and consider the matrix $M=\frac 1 d \sum M(r_i,s_i)$.
If there are $2k$ disjoint pairs of integers in $\{1,2,\ldots,d\}$ such that $r_is_j\not\equiv r_js_i \mod n$
for each pair $i,j$, then the norm of $M$ is at most $1-\frac {2k}{dn}$.  In particular if a constant
fraction of the indices can be paired in this way then the norm of $M$ is at most $1-O(\frac 1 n)$.
\end{corol}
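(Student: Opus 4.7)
The plan is to reduce Corollary \ref{c:1} to Theorem \ref{thm:main} by a triangle-inequality argument. Let $P = \{(i_1,j_1),\ldots,(i_k,j_k)\}$ be the $k$ pairwise-disjoint index pairs promised by the hypothesis (using $2k$ of the indices in $\{1,\ldots,d\}$), and let $U \subseteq \{1,\ldots,d\}$ be the remaining $d-2k$ unpaired indices. Then I would decompose
\[
M \;=\; \frac{1}{d}\sum_{(i,j)\in P}\bigl(M(r_i,s_i)+M(r_j,s_j)\bigr)\;+\;\frac{1}{d}\sum_{i\in U}M(r_i,s_i).
\]

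The first step is to bound each pair's contribution using Theorem \ref{thm:main}. Since $r_is_j\not\equiv r_js_i\pmod n$ for every $(i,j)\in P$, the theorem gives
\[
\bigl\|M(r_i,s_i)+M(r_j,s_j)\bigr\| \;=\; 2\left\|\tfrac{1}{2}(M(r_i,s_i)+M(r_j,s_j))\right\| \;\leq\; 2\left(1-\frac{c}{n}\right)
\]
for some absolute constant $c>0$ (the implicit constant in the $O(1/n)$ of Theorem \ref{thm:main}). The second step is to observe the trivial bound $\|M(r_i,s_i)\|\leq 1$ for every unpaired index $i\in U$; this holds because $M(r_i,s_i)=\frac{1}{2}(A(r_i,s_i)+A(r_i,s_i)^{*})$ is the Hermitian part of a unitary matrix.

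Combining these estimates via the triangle inequality gives
\[
\|M\| \;\leq\; \frac{1}{d}\Bigl[k\cdot 2\bigl(1-\tfrac{c}{n}\bigr)+(d-2k)\cdot 1\Bigr] \;=\; 1-\frac{2kc}{dn},
\]
which is the stated bound $1-\frac{2k}{dn}$ (up to the absolute constant absorbed into the $O(\cdot)$ notation). The final sentence of the corollary then follows immediately: if $k = \Omega(d)$ then $\|M\|\leq 1-O(1/n)$.

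There is no real obstacle here; the work is entirely done by Theorem \ref{thm:main}, and the role of the corollary is simply to package that bound in an additive form suitable for larger generating sets. The only point worth attention is making sure the pairs are genuinely disjoint, so that each matrix $M(r_i,s_i)$ appears in at most one pair and we do not over-count its norm contribution when applying the triangle inequality.
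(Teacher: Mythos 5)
Your proof is correct and is essentially identical to the paper's own argument: the paper also writes $M$ as an average of $k$ paired sums, each of norm at most $2-\tfrac{2}{n}$ by Theorem \ref{thm:main}, together with $d-2k$ single terms of norm at most $1$, and concludes by the triangle inequality. Your remark that the constant from the $O(1/n)$ in Theorem \ref{thm:main} is absorbed into the stated bound is a fair and accurate observation about the paper's bookkeeping.
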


\begin{proof}
The matrix $M$ is the average of $k$ matrices of norm at most $2-\frac 2 n$ and $d-2k$ matrices of norm $1$.
\end{proof}

\subsection{Remarks on uncertainty and Gauss sums}\label{sec:remarks}

Theorem \ref{thm:main} can be framed in the context of the following general question:  given two Hermitian 
matrices $A$ and $B$ what can one say about the norm of the sum $A + B$?  The triangle inequality  
yields an upper bound of $||A||+||B||$ on the largest eigenvalue of $A+ B$, and of course this bound is 
sometimes achieved e.g.~when the leading eigenvectors of $A$ and $B$ coincide.  Horn's inequalities
generalize this observation but due to their generality do not address the ``typical'' situation.
Specifically, suppose the diagonalizing bases are ``well-mixed" with respect to each other, meaning
roughly that the eigenspaces for the leading eigenvalues 
of $A$ and $B$ are very far from aligning.  This is meant to suggest that the two eigenbases 
should satisfy a kind of uncertainty principle:  any vector well-concentrated in the eigenbasis of $A$ should be 
spread out in the eigenbasis of $B$, and vice versa.  Applying this to the leading eigenvector of 
$A + B$ gives a bound on its eigenvalue as a weighted average of the largest parts of the spectra of 
$A$ and $B$.  See Section \ref{sec:up} for precise statements; in the context of twisted circulants, this 
yields Theorem \ref{thm:main}.

This is very similar to the uncertainty principle of Donoho-Stark \cite{DS}.  In their case the eigenspaces for $A$ and $B$ 
are related by the Fourier Transform, but this is not essential.  Their beautiful and simple proof of the uncertainty principle 
relies solely on the fact that the individual matrix entries of the Fourier Transform are of size $O(\frac{1}{\sqrt{n}})$.  In fact 
this is exactly what we mean by ``well-mixed.''  Thus our uncertainty principle, Lemma \ref{l:up}, is just a slight
generalization of the result from \cite{DS}.

For the particular twisted circulants we are interested in, when we computed the change of basis 
matrix (see Section \ref{sec:calc}) we were delightfully surprised by the appearance of Gauss 
sums in our calculation.  These enabled us to establish that the entries have norm exactly $\frac{1}{\sqrt{n}}$, 
and hence the uncertainty principle applies.

\section{Proof of main theorem}
\subsection{Uncertainty Principle} \label{sec:up}
For a set $S \subseteq \{0,\ldots,n-1\}$ and $x \in \C ^n$ define $x_S$ to be the projection of $x$ onto the coordinates of $S$:
$(x_S)_j =x_j$ for $j\in S$ and $(x_S)_j  =0$ for $j$ otherwise.  Let $\bar S$ denote the complement
$\{0,\ldots,n-1\}\setminus S$.

\begin{lem}[Uncertainty principle] \label{l:up}
Suppose $U$ is a unitary matrix and $c$ a positive real number such that 
$|U_{i,j} |\leq \frac{c}{\sqrt{n}}$ for all 
$1 \leq i,j \leq n$.  Then for any sets $S,T\subseteq\{0,\ldots,n-1\}$ and any $v\in \C^n$ of unit norm,
\[ \frac{|S||T|} {n} \geq \left( \frac{\sqrt{1-  \norm{(Uv)_{\bar T}}^2}  - \norm{v_{\bar S} }}{c\norm{v_S}} \right)^2. \]
\end{lem}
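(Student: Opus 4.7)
The plan is to follow the elegant argument of Donoho--Stark. Its core idea is that when every entry of a unitary matrix is $O(1/\sqrt{n})$, every submatrix has small operator norm, and consequently no vector can be simultaneously well-concentrated in the standard basis and in the basis induced by $U$.

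First I would use unitarity: since $\norm{Uv}=1$, splitting coordinates according to $T$ and $\bar T$ gives $\norm{(Uv)_T}=\sqrt{1-\norm{(Uv)_{\bar T}}^2}$. Next I would decompose $v=v_S+v_{\bar S}$ on the input side, so that $(Uv)_T=(Uv_S)_T+(Uv_{\bar S})_T$, and apply the reverse triangle inequality. Since coordinate projection and the unitary $U$ are both contractions, $\norm{(Uv_{\bar S})_T}\leq \norm{v_{\bar S}}$, and therefore
\[\norm{(Uv_S)_T}\;\geq\;\sqrt{1-\norm{(Uv)_{\bar T}}^2}\,-\,\norm{v_{\bar S}}.\]

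The final step, which is the only one that genuinely uses the hypothesis on the entries of $U$, is an upper bound on $\norm{(Uv_S)_T}$. The map $v_S \mapsto (Uv_S)_T$ is realized by the $|T|\times|S|$ submatrix $U_{T,S}$ of $U$, whose operator norm is at most its Frobenius norm:
\[\norm{U_{T,S}}\;\leq\;\sqrt{\sum_{i\in T,\,j\in S}\abs{U_{i,j}}^2}\;\leq\;\sqrt{|S|\,|T|\cdot c^2/n}\;=\;\frac{c\sqrt{|S|\,|T|}}{\sqrt{n}}.\]
Hence $\norm{(Uv_S)_T}\leq \frac{c\sqrt{|S|\,|T|}}{\sqrt{n}}\norm{v_S}$. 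Assuming the right-hand side of the first display is non-negative (otherwise the stated inequality is vacuous), combining these bounds and squaring yields the inequality claimed in the lemma.

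I do not anticipate a real obstacle; the only genuine choice is how to bound the operator norm of a submatrix, and passing through the Frobenius norm is the cleanest route. One could alternatively invoke the Cauchy--Schwarz inequality directly on $\sum_{i\in T}\bigl|\sum_{j\in S}U_{ij}(v_S)_j\bigr|^2$, arriving at the same estimate through a slightly more hands-on calculation.
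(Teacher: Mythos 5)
Your proof is correct and is essentially the paper's own argument (the Donoho--Stark decomposition $(Uv)_T=(Uv_S)_T+(Uv_{\bar S})_T$, the triangle inequality, unitarity to get $\norm{(Uv_{\bar S})_T}\le\norm{v_{\bar S}}$, and the key estimate $\norm{(Uv_S)_T}\le \frac{c\sqrt{|S||T|}}{\sqrt{n}}\norm{v_S}$); the only cosmetic difference is that the paper derives that estimate coordinatewise via Cauchy--Schwarz while you pass through the Frobenius norm of the submatrix $U_{T,S}$, which yields the identical bound. Your parenthetical assumption that the numerator is non-negative is the same implicit assumption the paper makes when it ``rearranges'' and squares, so it does not constitute a departure from (or a gap relative to) the paper's proof.
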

\begin{proof} The bound $|U_{i,j} |\leq \frac{c}{\sqrt{n}}$ implies that the maximum absolute value for a coefficient of $Uv_S$ 
is $|S|\frac{c}{\sqrt {n}} \frac{1}{\sqrt{|S|}} \norm {v_S}$ which implies that 
\begin{equation}  \label{e:1} \norm {( Uv_S)_T} \leq  \frac{ c\norm{v_S}}{\sqrt{n}}\sqrt{|S||T|} . \end{equation}
Now $1-  \norm{(Uv)_{\bar T}}^2
= \norm{(Uv)_T}^2  
\leq (\norm {(Uv_S)_T} + \norm{(Uv_{\bar S})_T})^2 
\leq ( \frac{ c\norm{v_S}}{\sqrt{n}}\sqrt{|S||T|} + \norm{v_{\bar S}})^2$ 
where we've used  \eqref{e:1} on the first term and the fact that $U$ is unitary on the second term.  Rearranging terms gives the result.
\end{proof}
 
\begin{corol}  \label{c:up} Let $U$ be a unitary matrix with $|U_{i,j}| \leq \frac{c}{\sqrt{n}}$ for all $1 \leq i,j \leq n$. For any $v\in \C^n$ of unit norm and sets $S,T\subseteq\{0,\ldots,n-1\}$ with $|S| |T| \leq \frac{n}{2c^2}$, we have  $\max (\norm{v_{\bar S}}, \norm{ (Uv)_{\bar T}} ) \geq \frac{1}{5} $.
\end{corol}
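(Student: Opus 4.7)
The plan is to prove the corollary by contradiction using Lemma~\ref{l:up} directly. Suppose, towards a contradiction, that both $\norm{v_{\bar S}} < \frac{1}{5}$ and $\norm{(Uv)_{\bar T}} < \frac{1}{5}$. Since $v$ has unit norm, this means $\norm{v_S}^2 = 1 - \norm{v_{\bar S}}^2 > \frac{24}{25}$, and in particular $\norm{v_S} \leq 1$. Similarly, $1 - \norm{(Uv)_{\bar T}}^2 > \frac{24}{25}$, so $\sqrt{1 - \norm{(Uv)_{\bar T}}^2} > \frac{\sqrt{24}}{5}$.

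Next I would plug these inequalities into the conclusion of Lemma~\ref{l:up}. The numerator on the right-hand side satisfies
\[
\sqrt{1 - \norm{(Uv)_{\bar T}}^2} - \norm{v_{\bar S}} > \frac{\sqrt{24}}{5} - \frac{1}{5} = \frac{\sqrt{24}-1}{5},
\]
while the denominator $c\norm{v_S}$ is at most $c$. Therefore Lemma~\ref{l:up} yields
\[
\frac{|S||T|}{n} > \frac{1}{c^2}\left(\frac{\sqrt{24}-1}{5}\right)^2 = \frac{25 - 2\sqrt{24}}{25\, c^2}.
\]

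A routine numerical check shows $\left(\frac{\sqrt{24}-1}{5}\right)^2 > \frac{1}{2}$ (indeed it is approximately $0.608$), so this gives $|S||T| > \frac{n}{2c^2}$, contradicting the hypothesis $|S||T| \leq \frac{n}{2c^2}$. Hence at least one of $\norm{v_{\bar S}}$ or $\norm{(Uv)_{\bar T}}$ must be at least $\frac{1}{5}$, as claimed.

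This proof is essentially a direct substitution, and I do not anticipate any serious obstacle; the only point worth verifying carefully is the numerical inequality $(\sqrt{24}-1)^2 > 25/2$, which is the source of the constant $\frac{1}{5}$ (any slightly larger constant would also work, but $\frac{1}{5}$ is a clean choice that makes the slack in the inequality evident).
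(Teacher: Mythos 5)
Your proof is correct and follows essentially the same route as the paper: apply Lemma~\ref{l:up} with the bound $\norm{v_S}\le 1$ and the hypothesis $|S||T|\le \frac{n}{2c^2}$, then observe that both tail norms being below $\frac15$ forces $\bigl(\sqrt{1-\norm{(Uv)_{\bar T}}^2}-\norm{v_{\bar S}}\bigr)^2>\frac12$, a contradiction. The only difference is that you carry out explicitly the numerical check (that $(\sqrt{24}-1)^2>25/2$) which the paper leaves as an easy verification.
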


\begin{proof}
Since $\norm{v_S} \le 1$, the assumption on the size of $|S||T|$ in Lemma \ref{l:up} yields the 
inequality:
\[ (\sqrt{1-  \norm{(Uv)_{\bar T}}^2}  - \norm{v_{\bar S} })^2 \leq \frac{1}{2}.\]  
It can easily be verified that $\max (\norm{v_{\bar S}}, \norm{ (Uv)_{\bar T}} ) \geq \frac{1}{5} $ is a 
necessary condition for the inequality to hold.
\end{proof}

\subsection{The eigenstructure of DC} \label{sec:calc}

Given an arbitrary matrix $M$, we'll say a unitary $U$ {\em diagonalizes} $M$ if $U^*MU$ is a diagonal matrix.  Note that $U e_i$  ($e_i$ being the $i$th coordinate basis vector) is the eigenvector for $M$ corresponding to the eigenvalue located on the $i$th diagonal element of $U^*MU$.  (Not all matrices have such a unitary but self-adjoint ones and unitaries do.)

 Let $F$ be the $n \times n$ Fourier transform matrix defined by 
$F_{k,l}= \frac 1 {\sqrt{n}} \omega^{kl}$ where $\omega= e^{2\pi i/n}$.
Also, define the permutation matrix $\Pi_s$ by $(\Pi_s)_{si,i} =1$ for $0\leq i \leq n-1$ with the remaining entries $0$;
thus $\Pi_s (x_0\ x_1\ \cdots\ x_{n-1})^*=(x_0\ x_s\ \cdots\ x_{s(n-1)})^*$ (with indices taken mod $n$).
 
\begin{lem}  \label{l:pibf} Let $C=S^s$ be a circulant and $D$ be diagonal with entries $a_i$ of unit norm.  Let
$\alpha=a_0a_1\cdots a_{n-1}$ and let $B$ be the diagonal matrix with entries
\[B_{k,k}= \frac{\lambda_0^k}{\prod_{l=0}^{k-1} a_{sl}}, \]
where $\lambda_0$ is any fixed $n$th root of $\alpha$.
Then the unitary matrix $\Pi_s B F$ diagonalizes $DC$. 
The $(j,j)$ entry of the resulting diagonal matrix $(\Pi_s B F)^* DC (\Pi_s B F)$ is $\omega^{j}\lambda_0$.
\end{lem}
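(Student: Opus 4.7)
My plan is to verify the claim directly by showing that each column of $U = \Pi_s B F$ is an eigenvector of $DC$ with the correct eigenvalue. First, I would observe that $U$ is unitary: $F$ is unitary by standard Fourier theory, $\Pi_s$ is a permutation matrix, and $B$ is diagonal with unit-modulus entries because $|\lambda_0|=1$ (note $|\alpha|=1$ since each $|a_i|=1$) and each $|a_{sl}|=1$. So it suffices to find the action on the basis $\{e_j\}$.

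Next I would set up the eigenvalue equation for $DC = DS^s$ directly. Using $(S^s v)_i = v_{i+s}$ (indices mod $n$), the equation $DS^s v = \lambda v$ becomes the recursion
\[ v_{i+s} \;=\; \lambda\, a_i^{-1}\, v_i \qquad\text{for all } i. \]
Since $\gcd(s,n)=1$, iterating this recursion $n$ times cycles through all coordinates and returns to $v_0$, forcing the compatibility condition $\lambda^n = \prod_i a_i = \alpha$. Thus the $n$ eigenvalues of $DC$ are exactly the $n$-th roots of $\alpha$, namely $\omega^j \lambda_0$ for $j=0,\ldots,n-1$, and the corresponding eigenvector $v^{(j)}$ is determined up to scalar by
\[ v^{(j)}_{sk} \;=\; \frac{(\omega^j\lambda_0)^k}{\prod_{l=0}^{k-1} a_{sl}}\, v^{(j)}_0. \]

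Now I would compute $Ue_j = \Pi_s B F e_j$ explicitly. Applying $F$ gives $\tfrac{1}{\sqrt n}\sum_k \omega^{kj} e_k$; applying $B$ multiplies the $k$-th coefficient by $\lambda_0^k / \prod_{l=0}^{k-1} a_{sl}$; applying $\Pi_s$ sends $e_k$ to $e_{sk}$. So
\[ (Ue_j)_{sk} \;=\; \tfrac{1}{\sqrt n}\,\omega^{kj}\,\frac{\lambda_0^k}{\prod_{l=0}^{k-1} a_{sl}}, \]
which matches the formula for $v^{(j)}_{sk}$ above (with $v^{(j)}_0 = 1/\sqrt n$). Verifying that $DS^s\,Ue_j = \omega^j\lambda_0\,Ue_j$ on the coordinate $sk$ for $k=0,\ldots,n-2$ is a straightforward cancellation: $a_{sk}(Ue_j)_{s(k+1)}$ equals $\omega^j\lambda_0(Ue_j)_{sk}$ after absorbing the $a_{sk}$ into the denominator product.

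The one subtle step, and the main (mild) obstacle, is the wrap-around at $k=n-1$, where $s(k+1)\equiv 0\pmod n$ and the exponent on $\omega$ and $\lambda_0$ each advance by $n$. Here one uses $\omega^n=1$ and $\lambda_0^n=\alpha=a_0a_1\cdots a_{n-1}$, together with the fact (from $\gcd(s,n)=1$) that $\{sl\bmod n : 0\le l\le n-1\}$ is a permutation of $\{0,\ldots,n-1\}$, so $\prod_{l=0}^{n-1} a_{sl} = \alpha$. These identities collapse the wrap-around expression down to the required $a_{s(n-1)}(Ue_j)_0$, completing the verification. This confirms that $U^*(DC)U$ is diagonal with $(j,j)$ entry $\omega^j\lambda_0$.
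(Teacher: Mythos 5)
Your proof is correct and follows essentially the same route as the paper: you exhibit the columns of $\Pi_s B F$ as the eigenvectors of $DC$ with eigenvalues $\omega^j\lambda_0$, which is exactly the verification the paper asserts as straightforward (you simply carry it out in detail, including the wrap-around step using $\lambda_0^n=\alpha$ and $\prod_l a_{sl}=\alpha$). No gaps; the only implicit assumption, $s\not\equiv 0$ so that $\gcd(s,n)=1$, is also implicit in the paper since $\Pi_s$ is only a permutation matrix for $s\neq 0$.
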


\begin{proof}    Define $\lambda_0$ to be an $n$th root of $\alpha$ so that $\lambda_0, \omega \lambda_0, \omega^2 \lambda_0, \dots \omega^{n-1} \lambda_0$ are all the $n$th roots of $\alpha$.   It is straightforward to verify that 
$\omega^{j}\lambda_0$ is an eigenvalue for $DC$ with eigenvector $x_j = \Pi_s v_j$ where the $k$th
coordinate of $v_j$ is given by
\begin{equation} \label{e:vdef} (v_j)_k=\frac{\omega ^{jk}}{\sqrt{n}} \frac{\lambda_0^k}{\prod_{l=0}^{k-1} a_{sl}} . \end{equation}
 Define $X$ (respectively $V$) to be the matrix whose $j$th column is $x_j$, (respectively $v_j$), so that $X$ is a unitary that diagonalizes $DC$ and $X= \Pi_s V$.   From \eqref{e:vdef}, since the second factor is independent of $j$, we see $V=BF$ with $B$ defined in the statement of the lemma.
\end{proof}
%\begin{corol} \label{c:2}
%Given a circulant $C$ and a diagonal matrix $D$ with diagonal entries of unit norm, 
%the unitary transformation that diagonalizes $DC$ has all entries of norm $\frac{1}{\sqrt{n}}$.
%\end{corol}

Given $r$, $s$, define $X(r,s)= \Pi_s B F$ where $B$ is a diagonal matrix with $B_{k,k}= \omega ^{-rs\frac{k(k-1)}{2}}$.
\begin{corol} \label{c:3} \label{c:2}
The matrix $X(r,s)$ is unitary, diagonalizes  $R^rS^s$, and has all entries of norm $\frac{1}{\sqrt{n}}$.\end{corol}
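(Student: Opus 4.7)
The plan is to recognize that the corollary is a direct specialization of Lemma \ref{l:pibf} to the case $D = R^r$, $C = S^s$, read off the diagonalization and unitarity from the lemma, and then verify the entry-norm claim by direct inspection.

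First, I would set $a_i = \omega^{ri}$ (the diagonal entries of $R^r$) and compute
\[ \alpha = \prod_{i=0}^{n-1} a_i = \omega^{r \cdot n(n-1)/2}. \]
Since $n$ is odd, $n$ divides $n(n-1)/2$, so $\alpha = 1$. I would then take $\lambda_0 = 1$ as the fixed $n$th root of $\alpha$ required by Lemma \ref{l:pibf}. With this choice the lemma's prescription for $B$ becomes
\[ B_{k,k} = \frac{1}{\prod_{l=0}^{k-1} \omega^{rsl}} = \omega^{-rs \cdot k(k-1)/2}, \]
which exactly matches the definition of $B$ in the statement of the corollary. Lemma \ref{l:pibf} therefore gives both that $X(r,s) = \Pi_s B F$ is unitary and that it diagonalizes $R^r S^s$ (with eigenvalues $\omega^j$ on the diagonal).

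For the entry-norm claim I would simply observe that $(BF)_{k,j} = \omega^{-rsk(k-1)/2} \cdot \omega^{kj}/\sqrt{n}$ has absolute value $1/\sqrt n$ for every $k,j$, because $B$ is diagonal with unit-modulus entries and every entry of $F$ has modulus $1/\sqrt n$. Left-multiplying by the permutation matrix $\Pi_s$ merely reorders rows, so the entries of $X(r,s)$ also all have magnitude $1/\sqrt n$.

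There is no real obstacle here; the only substantive verification is the identity $\alpha = 1$, which uses only that $n$ is odd. Everything else is a matter of matching notation with Lemma \ref{l:pibf} and reading off the consequence.
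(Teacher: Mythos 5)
Your proof is correct and is exactly the intended argument: the paper states this corollary without proof as an immediate specialization of Lemma \ref{l:pibf} to $D=R^r$, $C=S^s$, and your computation of $\alpha=1$ (using that $n$ is odd), the choice $\lambda_0=1$, and the matching of $B_{k,k}=\omega^{-rs\,k(k-1)/2}$ is precisely that specialization. The entry-norm observation via $B$ unitary diagonal, $|F_{k,j}|=1/\sqrt{n}$, and $\Pi_s$ permuting rows is likewise the intended reading.
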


\begin{lem} \label{l:2diagonals} Given nonzero elements $r_1,r_2, s_1,s_2$ of $\Z_n$ such that $r_1s_2\ne r_2s_1$,
each entry of  the matrix $(X(r_1,s_1))^* X(r_2,s_2)$ has norm $\frac{1}{\sqrt{n}}$. 
\end{lem}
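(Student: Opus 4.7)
The plan is to multiply out $(X(r_1,s_1))^*X(r_2,s_2)$ entry by entry and recognize the resulting expression as a normalized quadratic Gauss sum modulo $n$. Using the definition $X(r,s)=\Pi_s B(r,s) F$ with $B(r,s)$ diagonal and $(B(r,s))_{k,k}=\omega^{-rs\,k(k-1)/2}$, we have
\[
(X(r_1,s_1))^*X(r_2,s_2) \;=\; F^*\,B(r_1,s_1)^*\,\Pi_{s_1}^{-1}\Pi_{s_2}\,B(r_2,s_2)\,F.
\]
Since $n$ is an odd prime and $s_1,s_2\in\Z_n^\times$, the product $\Pi_{s_1}^{-1}\Pi_{s_2}$ is itself a permutation matrix $\Pi_t$ with $t=s_1^{-1}s_2\bmod n$. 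Thus the middle factor $B(r_1,s_1)^*\Pi_t B(r_2,s_2)$ has support only on entries $(i,j)$ with $i=tj\bmod n$, and on that support it contributes a single phase which is a quadratic monomial in $j$.

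Next I would conjugate by $F$. Writing everything out, the $(a,b)$ entry becomes a single sum
\[
\frac{1}{n}\sum_{j=0}^{n-1}\omega^{\alpha j^2+\beta_{a,b}\,j}
\]
for some linear $\beta_{a,b}$ in $(a,b)$ and a leading coefficient
\[
\alpha \;=\; \tfrac{1}{2}\bigl(r_1 s_1 t^2 - r_2 s_2\bigr) \pmod n,
\]
where $\tfrac12$ makes sense because $n$ is odd. Substituting $t=s_1^{-1}s_2$, the condition $\alpha\not\equiv 0\pmod n$ becomes exactly $r_1s_2\not\equiv r_2s_1\pmod n$, which is our hypothesis. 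So the quadratic is non-degenerate.

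The final step is then a standard quadratic Gauss sum computation: complete the square (again using that $2$ is invertible mod $n$) to rewrite the sum as $\omega^{\text{const}}\sum_{k}\omega^{\alpha k^2}$. Since $\alpha$ is a nonzero residue mod an odd prime $n$, the classical evaluation gives $\bigl|\sum_k\omega^{\alpha k^2}\bigr|=\sqrt n$, independent of $a,b$. Dividing by the overall factor of $n$ yields magnitude $1/\sqrt n$ for every entry.

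The only real obstacle is the bookkeeping: one must track several modular arithmetic inversions (the $\tfrac12$, $s_1^{-1}$, and completing the square) and verify that the leading coefficient of the resulting quadratic in $j$ depends on $(r_1,s_1,r_2,s_2)$ but not on $(a,b)$, so that the Gauss sum bound applies uniformly to every entry. The conceptual heart of the argument is simply that a quadratic Gauss sum over $\Z/n\Z$ with invertible leading coefficient has absolute value $\sqrt n$, and that the non-commutativity built into the twisted circulants produces exactly such a quadratic under the hypothesis $r_1s_2\not\equiv r_2s_1$.
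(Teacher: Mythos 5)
Your proposal is correct and follows essentially the same route as the paper: expand $(X(r_1,s_1))^*X(r_2,s_2)=F^*B_1^*\Pi_{s_1^{-1}s_2}B_2F$, observe that each entry is a normalized quadratic exponential sum whose leading coefficient is a unit multiple of $r_1s_2-r_2s_1$, and invoke the classical evaluation $\bigl|\sum_k\omega^{\alpha k^2}\bigr|=\sqrt{n}$ for $\alpha\not\equiv 0$ modulo the odd prime $n$. The only differences are bookkeeping conventions (which index the permutation reassigns, and the harmless completion of the square), so nothing further is needed.
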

\begin{proof}
$(X(r_1,s_1))^* X(r_2,s_2) = F^* B_1^* \Pi_{s_1}^ * \Pi_{s_2} B_2^* F= F^* B_1^* \Pi_{s_1^{-1} s_2} B_2^* F$.  
We compute that the $(c,d)$ entry of this product is
\[ [ (X(r_1,s_1))^* X(r_2,s_2) ]_{c,d} = \sum_j \omega ^{-cj} \omega^ {r_1s_1j(j-1)/2}  \omega^{-r_2s_2s_1 s_2^{-1}j(s_1s_2^{-1}j -1)/2} \omega^{s_1 s_2^{-1}jd} .  \]
Let $\Omega$ be the $n$th root of unity satisfying $\omega=\Omega^2$.  (We use here that $n$ is odd.)
Then the above sum becomes 
\[ \sum_j \Omega^{\alpha j^2+\beta j} \]
where $\alpha=r_1s_1-r_2s_2^{-1}s_1^2$ and $\beta=-2c-r_1s_1+r_2s_1+2s_1s_2^{-1}d$.
As long as $\alpha\ne0$, i.e. $r_1s_2\ne r_2s_1$, this is a Gauss sum whose norm is $\sqrt{n}$.
\end{proof}

In the case that $r_1s_2=r_2s_1$, we get $\alpha=0$ and the sum in the proof of the lemma is not 
a Gauss sum.  We discuss this situation in Section \ref{sec:equalslopes}.

\subsection{Putting it together} \label{s:4}

We now prove our main results.  Lemma \ref{l:2diagonals} does not apply in the case $s_1=0$,
but here the analysis is easier and in fact doesn't require that the second operator have entries
coming from $R$.  A special case is $M=\frac 1 2 \left(M(r_1,0)+M(r_1,s_1)\right)$.

\begin{theorem}\label{thm:diagonal}
  Set $M =\frac 1 2 M(r_1,0) + \frac 1 2 \left(D C + (D C)^*\right)$ with $C=S^s$ and 
$D$ any diagonal matrix with entries of unit norm.  Then we have $\norm{M} \leq 1- O(\frac{1}{n})$.
\end{theorem}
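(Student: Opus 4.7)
The plan is to invoke the uncertainty principle (Corollary \ref{c:up}) with the eigenbases of the two Hermitian summands of $M$. The first summand $M(r_1,0)=\frac12(R^{r_1}+R^{-r_1})$ is already diagonal in the standard basis, with $j$-th eigenvalue $\cos(2\pi r_1 j/n)$. By Lemma \ref{l:pibf}, the second summand $\frac12(DC+(DC)^*)$ is diagonalized by $U=\Pi_s B F$, with eigenvalues $\mathrm{Re}(\omega^j\lambda_0)$. Since $\Pi_s$ is a permutation, $B$ is diagonal with unit-modulus entries, and $F$ has all entries of modulus $1/\sqrt n$, the unitary $U$ has all entries of modulus exactly $1/\sqrt n$, so Corollary \ref{c:up} applies with $c=1$.

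Let $v$ be a unit leading eigenvector of $M$ and write $\|M\|=1-\epsilon$. Since each summand of $M$ has norm at most $1$, expanding $\langle v,Mv\rangle=1-\epsilon$ forces both $\langle v,M(r_1,0)v\rangle\ge 1-2\epsilon$ and $\mathrm{Re}\,\langle v,DCv\rangle\ge 1-2\epsilon$. For a parameter $\delta>0$ to be chosen, set
\[
S=\{\,j:\cos(2\pi r_1 j/n)\ge 1-\delta\,\},\qquad T=\{\,j:\mathrm{Re}(\omega^j\lambda_0)\ge 1-\delta\,\}.
\]
A standard Markov-type estimate in each eigenbasis (using $1-\cos(\cdot)\ge\delta$ off of $S$, and similarly for $T$) then yields $\|v_{\bar S}\|^2\le 2\epsilon/\delta$ and $\|(U^*v)_{\bar T}\|^2\le 2\epsilon/\delta$.

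Because $n$ is prime and $r_1\ne 0$, the map $j\mapsto r_1 j$ is a bijection of $\Z_n$, so the values $\cos(2\pi r_1 j/n)$ are just a permutation of $\{\cos(2\pi k/n):0\le k<n\}$; hence $|S|=O(\sqrt\delta\,n)$, and an identical computation using $\arg(\omega^j\lambda_0)=\arg\lambda_0+2\pi j/n$ gives $|T|=O(\sqrt\delta\,n)$. Choose $\delta=c/n$ with an absolute constant $c>0$ small enough that $|S||T|\le n/2$. Then Corollary \ref{c:up} gives $\max\bigl(\|v_{\bar S}\|,\|(U^*v)_{\bar T}\|\bigr)\ge 1/5$, while the Markov bound bounds this maximum above by $\sqrt{2\epsilon/\delta}$. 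Combining yields $\epsilon\ge\delta/50=\Omega(1/n)$, which is the desired conclusion.

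The substantive content is not in any single calculation but rather in verifying that the change of basis from the standard basis to the $U$-basis has all entries of modulus $1/\sqrt n$, i.e., the two bases are ``well-mixed'' in the sense required by the uncertainty principle. This is precisely why we can handle $D$ arbitrary and not just $D=R^{r_2}$: only one of the two operators needs to be a twisted circulant, so Lemma \ref{l:2diagonals} is not invoked here and the elementary computation in Lemma \ref{l:pibf} already supplies the necessary diagonalizing unitary.
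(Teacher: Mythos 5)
Your proposal is correct and is essentially the paper's own argument: diagonalize the Hermitian part of $DC$ by $U=\Pi_s B F$ from Lemma \ref{l:pibf}, note that $U$ (hence $U^*$) has all entries of modulus $1/\sqrt{n}$, and play the uncertainty principle (Corollary \ref{c:up}) off against the fact that only $O(\sqrt{n})$ eigenvalues of either summand lie within $O(1/n)$ of $1$. The only deviations are cosmetic: you take $T$ to be a superlevel set of $\mathrm{Re}(\omega^{j}\lambda_0)$ rather than the paper's top-$\sqrt{n}$ values of the projections $f_i$ onto the direction of $\ip{D' U^*v}{U^*v}$, and (like the paper) you should add the one-line remark that applying the same argument to $-M$ rules out a large negative eigenvalue, since $\norm{M}$ is the largest eigenvalue in absolute value.
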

\begin{proof}
Let $v$ be a maximal eigenvalue for the self-adjoint operator $M$ so that 
$\norm{M}= \ip{M v}{v} \leq | \ip{M_1 v}{v}| + |\ip{DCv}{v} | + 1 =   | \ip{M_1 v}{v}| + |\ip{D' Uv}{Uv} | + 1$,
where $M_1$ is shorthand for the matrix $M(r_1,0)$ and where the matrix $U$ diagonalizes the unitary matrix 
$DC$ with resulting diagonal matrix $D'$, i.e. $D'= U^*(DC) U$.  By Lemma \ref{l:pibf}, $U= \Pi_s B F$ and 
therefore both $U$ and $U^*$ have all entries of norm $\frac{1}{\sqrt{n}}$ and therefore satisfy the conditions 
for the uncertainty principle (Corollary \ref{c:up}).  Choosing $S=[1, \sqrt{n}/2]$ we have 
$\max (\norm{v_{\bar S}}, \norm{ (U^*v)_{\bar T}} ) \geq \frac{1}{5} $ for any set $T$ with $|T| \leq \sqrt{n}$.  
If this maximum is achieved by the first term, the fact that all the eigenvalues of $D$ on $\bar S$ are bounded by 
$2- O(\frac{1}{n})$ gives the result.  We are left with showing the result under the assumption that 
$\norm{ (U^*v)_{\bar T}} \geq \frac{1}{5}$.  We note that by the proof of Lemma \ref{l:pibf} the eigenvalues of 
$U^*$ are spread evenly around the unit circle.  If we let $\alpha$ be the unit vector in the direction of 
$\ip{D' U^*v}{U^*v}$ and set $f_i= \ip{ (D')_i}{\alpha}$ we have $|\ip{D' U^*v}{U^*v}| = \sum_i f_i |v_i|^2$.  
Choosing $T$ to be the locations $i$ for the top $\sqrt{n}$ values of $f_i$ (in absolute value) and noting 
that the remaining values of $f_i$ must have absolute value below $1- O(\frac{1}{n})$ yields the result.
\end{proof}

% I did a funny numbering thing to make this come out as Theorem 1.1.  We can undo this if we want.
% -aa
%
\begin{thmn}[\ref{thm:main}]
 Let $M = \frac 1 2 \left( M(r_1,s_1)+M(r_2,s_2)\right)$ with $r_1,s_1,r_2,s_2$ nonzero elements 
of $\Z_n$.  If $r_1s_2 \not\equiv r_2 s_1 \mod n$ then $\norm{M} \leq 1- O(\frac{1}{n})$.
\end{thmn}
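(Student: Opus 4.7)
The plan is to mimic the proof of Theorem \ref{thm:diagonal}, but now diagonalize both Hermitian summands of $M$ and use the uncertainty principle (Corollary \ref{c:up}) to preclude the leading eigenvector of $M$ from being concentrated on the ``top'' eigenspaces of both $A(r_1,s_1)$ and $A(r_2,s_2)$ simultaneously. The bridge between the two eigenbases is Lemma \ref{l:2diagonals}: the hypothesis $r_1 s_2 \not\equiv r_2 s_1$ is precisely what makes $X(r_1,s_1)^* X(r_2,s_2)$ have all entries of modulus $\tfrac{1}{\sqrt{n}}$, so Corollary \ref{c:up} applies with $c=1$.

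First I would fix a unit vector $v$ with $\|M\| = \ip{Mv}{v}$ and set $A_i := R^{r_i} S^{s_i}$, so that
$$\|M\| \leq \tfrac{1}{2}\,|\ip{A_1 v}{v}| + \tfrac{1}{2}\,|\ip{A_2 v}{v}|.$$
For each $i$, let $U_i = X(r_i, s_i)$, $D_i = U_i^* A_i U_i$, and $w_i = U_i^* v$. By Lemma \ref{l:pibf}, $D_i$ is diagonal with entries equal to a common phase times the $n$-th roots of unity, hence $n$ equally spaced points on the unit circle. Choosing a unit complex $\alpha_i$ with $\alpha_i |\ip{A_i v}{v}| = \ip{A_i v}{v}$ and setting $f_{i,j} = \mathrm{Re}(\bar{\alpha}_i (D_i)_{jj}) \in [-1, 1]$ gives
$$|\ip{A_i v}{v}| = \sum_j f_{i,j}\, |w_{i,j}|^2.$$

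Next I would introduce the ``bad'' sets $T_i = \{j : f_{i,j} > 1 - \kappa/n\}$ for a small universal constant $\kappa>0$. Since $\{\bar{\alpha}_i (D_i)_{jj}\}_j$ consists of $n$ equally spaced points on the unit circle, elementary geometry gives $|T_i| = O(\sqrt{\kappa n})$, so $\kappa$ can be chosen small enough that $|T_1|\,|T_2| \leq n/2$. Because $w_2 = (U_2^* U_1)\,w_1$ and by Lemma \ref{l:2diagonals} every entry of $U_2^* U_1$ has modulus $\tfrac{1}{\sqrt{n}}$, Corollary \ref{c:up} yields $\max(\|(w_1)_{\bar T_1}\|, \|(w_2)_{\bar T_2}\|) \geq 1/5$. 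Without loss of generality the first quantity is at least $1/5$, whence
$$|\ip{A_1 v}{v}| \leq \|(w_1)_{T_1}\|^2 + (1 - \kappa/n)\,\|(w_1)_{\bar T_1}\|^2 = 1 - (\kappa/n)\,\|(w_1)_{\bar T_1}\|^2 \leq 1 - \tfrac{\kappa}{25 n},$$
while $|\ip{A_2 v}{v}| \leq 1$ trivially, yielding $\|M\| \leq 1 - \tfrac{\kappa}{50 n}$. The only genuine obstacle is a bit of constant-juggling: verifying that a single $\kappa$ simultaneously gives $|T_1|\,|T_2| \leq n/2$ (so that Corollary \ref{c:up} applies with $c=1$) and leaves $f_{i,j}$ bounded below $1$ by order $1/n$ on each $\bar T_i$. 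This is easy because each $|T_i|$ grows only like $\sqrt{n}$, and the non-commutativity hypothesis $r_1 s_2 \not\equiv r_2 s_1$ enters the argument only through Lemma \ref{l:2diagonals}.
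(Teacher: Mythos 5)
Your proposal is correct and takes essentially the same route as the paper's proof: diagonalize each $A_i$ by $U_i=X(r_i,s_i)$, invoke Lemma \ref{l:2diagonals} to see that the change-of-basis matrix $U_2^*U_1$ has all entries of modulus $\frac{1}{\sqrt n}$, and apply the uncertainty principle (Corollary \ref{c:up}) to sets of $O(\sqrt{n})$ near-extremal eigenvalue indices to force the eigenvector mass off the top of one of the two spectra. If anything, you carry out the concluding estimates more explicitly than the paper, which simply states that the remainder mirrors the proof of Theorem \ref{thm:diagonal}.
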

\begin{proof}  
The argument has lots of similarities to the previous result.  
For shorthand write $A_i=A(r_i,s_i)=R^{r_i}S^{s_i}$ (for $i=1,2$) so that $M(r_i,s_i)=A_i+A_i^*$.  Let $v$ be a maximal eigenvalue for the self-adjoint operator $M$ so that $\norm{M}= \ip{M v}{v} \leq | \ip{A_1 v}{v}| + |\ip{A_2 v}{v} | + 2 =   | \ip{B_1 U_1 v}{U_1v}| + |\ip{B_2 U_2v}{U_2v} | + 2$ where the unitary matrix $U_i=X(r_i,s_i)$ diagonalize $A_i$ with resulting diagonal matrices $B_i$, i.e., $B_i= U_i A_i U_i^*$ for $i=1,2$.
We write $w=U_1v$ and we consider the resulting quantities $|\ip{B_1w}{w}|$ and $|\ip{B_2 (U_2U_1^* w)}{U_2U_1^* w}|$.  Lemma \ref{l:2diagonals} establishes that  $U_2U_1^*$ has all entries bounded by $\frac{1}{\sqrt{n}}$ and therefore satisfies the uncertainty principle.  The remainder of the proof mirrors that of the previous theorem.  
\end{proof}

\section{Equal slopes}\label{sec:equalslopes}

The hypothesis that $r_1s_2\ne r_2s_1$ in $\Z_n$ is required for the proof of Lemma \ref{l:2diagonals}.
If $r_1s_2= r_2s_1$ then the sum in the proof of that lemma is not a Gauss sum, because $\alpha=0$.
For each $c$ there is a single value of $d$ that gives a sum of $1$, and the rest give 
zeroes.  The resulting matrix is a permutation matrix.

From \eqref{eq:almostcommute} it follows that $(A(r,s))^k=\omega^{\frac{k(k-1)}2 rs}A(kr,ks)$.
Thus $A(r,s)$ and $\omega I$ generate an abelian group of matrices including $A(kr,ks)$ for all $k$.
So $r_1s_2= r_2s_1$ implies that $M(r_1,s_1)$ and $M(r_2,s_2)$ commute and generate a group
(usually) isomorphic to $\Z_n^2$.

In this case, the matrix $ M=\frac{1}{2}(M(r_1,s_1) + M(r_2,s_2))$ will have norm close to 1 for some choices of the parameters, and will not have norm close to 1 for other choices of parameters.  Specifically, we note that the eigenvalues of $M$ are given as follows.  Let $k=r_1^{-1}r_2 = s_1^{-1} s_2$, with the inverses taken modulo $n$.  Then the eigenvalues of $M$ are given by:
$$
\lambda_d = \frac{1}{2}\left(\cos \frac{2\pi d}{n} + \cos \frac{2\pi (\frac{-k(k-1)}{2}r_1s_1+kd)}{n}\right), \ \ \ d=0,\dots, p-1.
$$
We see that for a given $n$,  these eigenvalues only depend on $k$ and the product $r_2s_2$. For $n=401$, Figure
\ref{fig} shows the values of these parameters for which the matrix $M$ has norm greater than $1-\cos(\frac{2\pi}{n})$.  In the graph, the horizontal axis gives the value of $r_2s_2$ while the vertical axis gives the value of $kr_2s_2$.

\begin{figure}[htbp]
\begin{center}
\includegraphics[height=6in]{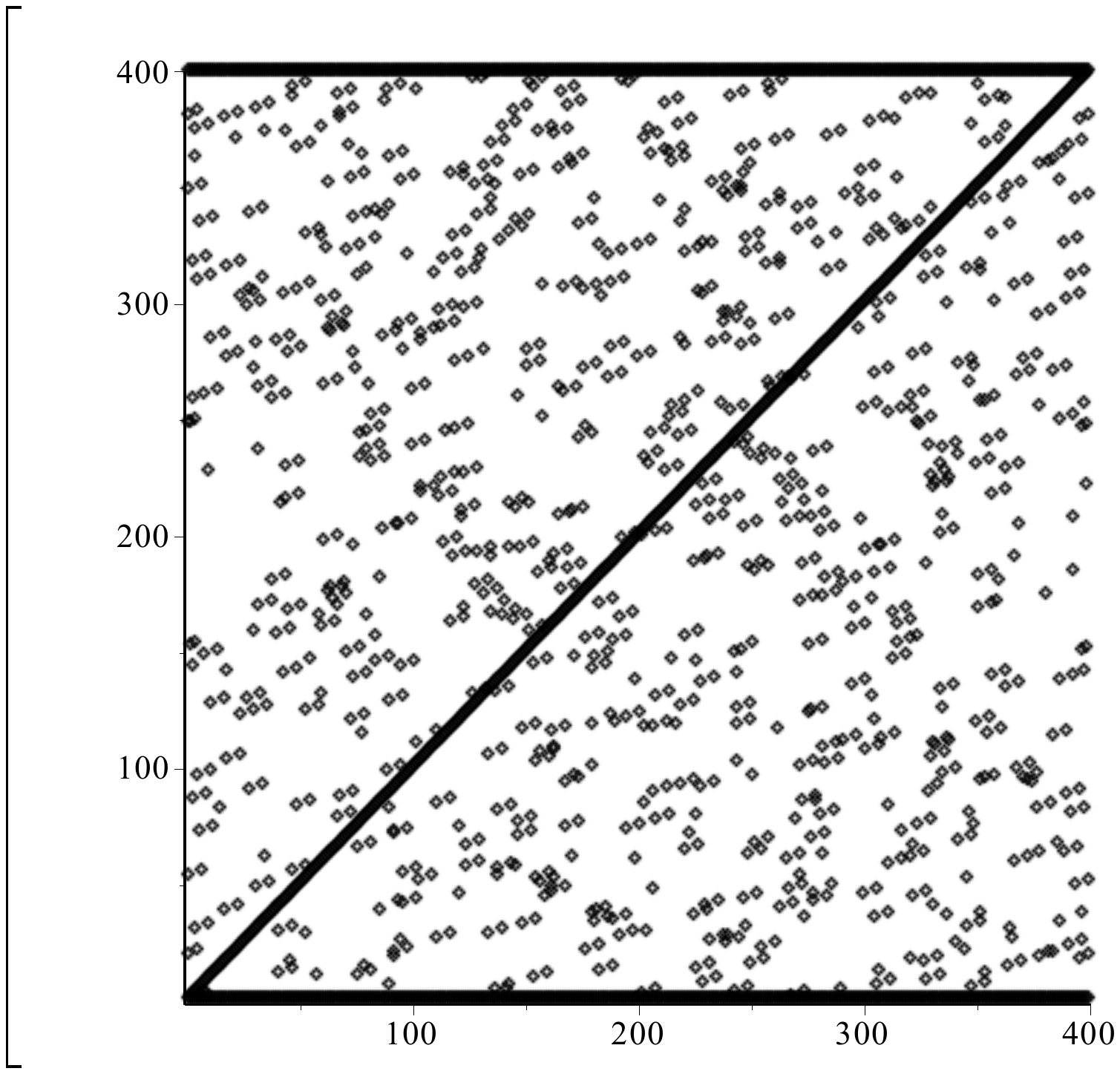}
\caption{A mark indicates that $M$ has eigenvalues greater than $1-\frac 1 n$ for the given choices of 
$r_1s_1$ and $k$.  Here $n=401$.} 
\label{fig}
\end{center}
\end{figure}

On the other hand, one finds computationally, again for $n=401$, that the norm of $M$ is less than $1-\frac{1}{n}$ for approximately half of the choices of the parameters.

\section{Higher dimensional Heisenberg groups}\label{sec:higher}

An analysis very similar to what we have already done can be carried out for the higher dimensional
Heisenberg groups defined in Section \ref{sec:results}.  We denote this group by $H(p,d)$ where $p=n$ is
still an odd prime.  As $p$ and $d$ will not change we also refer to this simply as $H$.
It is a $p$-group of order $p^{2d+1}$ and nilpotency class $d+1$.

Here we analyze random walk on $H$.
We briefly describe the representation theory of $H$, as communicated to us by Persi Diaconis. 
There are $p^{2d}$ one-dimensional irreducible representations and $p-1$ irreducible representations 
of dimension $p^d$.  These latter representations are described as follows.   We view elements of $H$ as triples $(x,y,z)$, where $x,y\in\Z_p^d$, and $z\in\Z_p$.  Let $q=\exp \frac{2\pi i }{p}$ and let $V$ be the vector space of all complex-valued functions on $(\Z_p)^d$.   Then for each $0\neq c\in \Z_p$, there is an irreducible representation 
$\rho_c$ of $H$ on $V$ given by
$$
[\rho_c(x,y,z)f](w) = q^{c(y\cdot w+z)} f(w+x).
$$

The key to understanding these representations, and the random walk, is a tensor product decomposition.
Let $W$ denote the vector space of complex-valued functions on $\Z_p$, and note that the $V$ is naturally isomorphic to $W^{\otimes d}$.  Define operators $S$ and $R$ on $W$ by
$$
[Sg](u)=g(u+1),\ \ \ [Rg](u) = q^u g(u),
$$
where $S$ and $R$ are the matrices defined in Section \ref{sec:results}.
Then one easily verifies that the operator $\rho_c(x,y,z)$ decomposes as
$$
\rho_c(x,y,z) = q^{cz} \biggl[ R^{cy_1} S^{x_1} \otimes \cdots \otimes R^{cy_d} S^{x_d}\biggr]. 
$$

We are interested in bounding the top eigenvalue of the average of $\rho_c$ on the size $4d$ generating set
consisting of the $e_i=(w_i,0,0)$ and the $f_i=(0,w_i,0)$ and their negatives, where $w_i$ denotes the $i$th 
vector in the standard basis of $\Z_p^d$.
We see that
$$
\rho_c(e_i) = I \otimes \cdots I \otimes S\otimes I \cdots \otimes I,
$$
with the $S$ appearing in the $i$th tensor factor, and 
$$
\rho_c(f_i) = I \otimes \cdots I \otimes R^c\otimes I \cdots \otimes I.
$$

So for fixed $i$ the average $\frac 1 4 (\rho_c(e_i) + \rho_c(-e_i) +  \rho_c(f_i) + \rho_c(-f_i))$
is precisely the sum $\frac 1 2 M(0,1)+ \frac 1 2 M(c,0)$, tensored with a bunch of identity matrices.
Recall from Section \ref{sec:intro} that $\frac 1 2 M(0,1)+ \frac 1 2 M(c,0)$ is the same as the matrix 
$M(c)$ studied in \cite{AnExercise}.
Thus, by the results of \cite{AnExercise} or \cite{persi_et_al}, or by Theorem \ref{thm:diagonal},
the top eigenvalue of such an operator is at most $1-O(1/p)$.
Averaging these over $i$ again produces an operator with top eigenvalue at most $1-O(1/p)$.

By contrast, the one-dimensional representations of $H=H(p,d)$ map each of the $4d$ generators $\{\pm e_i, \pm f_i\}$
to a $p$th root of unity, so the largest value for the average of these $4d$ numbers arises when all but one have value
$1$ and the other is $\exp(2\pi i/p)$.  This comes out to something larger than $1-O(\frac 1 {dp^2})$, which we have
shown is much larger than the contribution from the high-dimensional representations.  Thus again in this case the rate of
convergence is governed by the one-dimensional representations.

%Presumably this means that the convergence rate of random walk on this group is governed
%by the 1-dimensional representations (whose top eigenvalue could be as much as $1-O(\frac 1 {dp^2})$?).

\begin{corol}\label{cor:higher}
For simple random walk on $H(p,d)$ with steps $\{\pm w_i,0,0),(0,\pm w_i,0)\}$ each chosen with the same 
probability $1/4d$, the mixing time is $O(dp^2)$.
\end{corol}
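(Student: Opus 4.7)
The plan is to apply Diaconis's Upper Bound Lemma and partition the sum over nontrivial irreducible representations of $H = H(p,d)$ into the $p^{2d}-1$ nontrivial one-dimensional characters and the $p-1$ high-dimensional irreducibles $\rho_c$ of dimension $p^d$. Writing $P$ for the uniform measure on the $4d$ generators and $U$ for the uniform distribution on $H$, the lemma gives
\[
4\|P^{*t} - U\|_{TV}^2 \le \sum_{\rho} d_\rho \Tr\bigl(\hat P(\rho)^{2t}\bigr),
\]
where the sum runs over nontrivial irreducible $\rho$ and $\hat P(\rho)=\sum_g P(g)\rho(g)$. The task then reduces to bounding each of the two pieces separately.

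For the high-dimensional representations, I would exploit the tensor-product compatibility established in Section \ref{sec:higher}: $\hat P(\rho_c)$ is the average over $i = 1, \ldots, d$ of pairwise commuting operators $T_i = I \otimes \cdots \otimes M(c) \otimes \cdots \otimes I$ with $M(c) = \frac{1}{2}M(0,1) + \frac{1}{2}M(c,0)$ occupying the $i$th tensor slot. Since the $T_i$ commute, $\hat P(\rho_c)$ is simultaneously diagonalized factor-by-factor, and its spectrum consists of $d$-fold averages of eigenvalues of $M(c)$; each such average has absolute value at most $1 - \Omega(1/p)$ by \Thm{thm:diagonal}. Hence $\Tr(\hat P(\rho_c)^{2t}) \le p^d (1 - \Omega(1/p))^{2t}$, and summing over the $p-1$ values of $c$ yields a total high-dimensional contribution of $p^{2d+1}(1 - \Omega(1/p))^{2t}$, which is $o(1)$ once $t \gg dp\log p$, well before $t = \Theta(dp^2)$.

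For the one-dimensional characters $\chi_{a,b}$ indexed by $(a,b) \in \Z_p^d \times \Z_p^d$, a direct calculation (using that such characters are trivial on the commutator subgroup and therefore depend only on the $x$ and $y$ coordinates) yields
\[
\hat P(\chi_{a,b}) = \frac{1}{2d}\sum_{i=1}^d \bigl(\cos(2\pi a_i/p) + \cos(2\pi b_i/p)\bigr),
\]
matching the Fourier transform of simple random walk on $\Z_p^{2d}$ with the standard generating set. The classical mixing analysis for that walk (see \cite{persiBook}) shows that $\sum_{(a,b)\ne 0} \hat P(\chi_{a,b})^{2t}$ is $o(1)$ precisely when $t = \Omega(dp^2)$. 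For the matching lower bound, the single character $\chi_{w_1,0}$ has eigenvalue $\frac{1}{2d}(\cos(2\pi/p) + 2d - 1) = 1 - \Theta(1/(dp^2))$, so the standard estimate $\|P^{*t} - U\|_{TV} \ge \tfrac{1}{2}|\hat P(\chi_{w_1,0})|^t$ forces the total variation distance to remain bounded away from $0$ when $t = o(dp^2)$.

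The only place that requires care — not really an obstacle, since all the operator-theoretic content is already packaged in \Thm{thm:diagonal} — is balancing the geometric decay $(1 - \Omega(1/p))^{2t}$ against the exponentially large dimension factor $p^{2d+1}$ contributed by the high-dimensional representations, and confirming that the one-dimensional characters therefore set the true bottleneck at $\Theta(dp^2)$. Both comparisons go through by elementary estimates, and the corollary follows.
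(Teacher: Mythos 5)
Your proposal is correct and follows essentially the same route as the paper: the Diaconis upper bound lemma, the tensor-product decomposition putting $M(c)=\tfrac12 M(0,1)+\tfrac12 M(c,0)$ in one slot so that Theorem \ref{thm:diagonal} gives the $1-O(1/p)$ bound for the $p^d$-dimensional representations, and the reduction of the one-dimensional characters to the walk on $\Z_p^{2d}$ with slowest eigenvalue $1-\Theta(1/(dp^2))$, which sets the $O(dp^2)$ bottleneck. Your explicit trace bookkeeping (using that the $T_i$ commute, though the triangle inequality already suffices) and the lower-bound witness $\chi_{w_1,0}$ are minor elaborations of what the paper states more informally.
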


\end{document}